\theoremstyle{plain}
\newtheorem{theorem}{Theorem}
\newtheorem{lemma}[theorem]{Lemma}
\newtheorem{proposition}[theorem]{Proposition}
\theoremstyle{remark}
\newtheorem{remark}[theorem]{Remark}
\newtheorem{claim}[theorem]{Claim}
\newtheorem*{acknowledgments}{Acknowledgments}
\tikzset{myarrow/.style={->,>=stealth,line width=0.5pt}}
\begin{document}

\title{Entire or rational maps with integer multipliers}

\author{Xavier Buff}
\address{Institut de Math\'{e}matiques de Toulouse, UMR 5219, Universit\'{e} de Toulouse, CNRS, UPS, F-31062 Toulouse Cedex 9, France}
\email{xavier.buff@math.univ-toulouse.fr}

\author{Thomas Gauthier}
\address{Laboratoire de Math\'{e}matiques d'Orsay, UMR 8628, Universit\'{e} Paris-Saclay, CNRS, F-91405 Orsay Cedex, France}
\email{thomas.gauthier1@universite-paris-saclay.fr}

\author{Valentin Huguin}
\address{Constructor University Bremen gGmbH, Campus Ring 1, 28759 Bremen, Germany}
\email{vhuguin@constructor.university}

\author{Jasmin Raissy}
\address{Institut de Math\'{e}matiques de Bordeaux, UMR 5251, Universit\'{e} de Bordeaux, CNRS, Bordeaux INP, F-33405 Talence, France}
\email{jasmin.raissy@math.u-bordeaux.fr}

\thanks{This research was conducted during the Simons Symposium on Algebraic, Complex and Arithmetic Dynamics (2022).}

\thanks{The research of the first and fourth authors was supported by the ANR LabEx CIMI (grant ANR-11-LABX-0040) within the French State program ``Investissements d’Avenir''. The research of the second and fourth authors was supported by the Institut Universitaire de France (IUF). The research of the third author was supported by the German Research Foundation (DFG, project number 455038303).}

\subjclass[2020]{Primary 37F10, 37P35}

\begin{abstract}
Let $\mathcal{O}_{K}$ be the ring of integers of an imaginary quadratic field $K$. Recently, Ji and Xie proved that every rational map $f \colon \widehat{\mathbb{C}} \rightarrow \widehat{\mathbb{C}}$ of degree $d \geq 2$ whose multipliers all lie in $\mathcal{O}_{K}$ is a power map, a Chebyshev map or a Latt\`{e}s map. Their proof relies on a result from non-Archimedean dynamics obtained by Rivera-Letelier. In the present note, we show that one can avoid using this result by considering a differential equation instead. Our proof of Ji and Xie's result also applies to the case of entire maps. Thus, we also show that every nonaffine entire map $f \colon \mathbb{C} \rightarrow \mathbb{C}$ whose multipliers all lie in $\mathcal{O}_{K}$ is a power map or a Chebyshev map.
\end{abstract}

\maketitle

\section{Introduction}

Suppose that $S$ is a Riemann surface and $f \colon S \rightarrow S$ is a holomorphic map. We recall that a point $z_{0} \in S$ is \emph{periodic} for $f$ if there exists an integer $p \geq 1$ such that $f^{\circ p}\left( z_{0} \right) = z_{0}$. In this case, the least such integer $p$ is called the \emph{period} of $z_{0}$. The \emph{multiplier} of $f$ at $z_{0}$ is the unique eigenvalue $\lambda \in \mathbb{C}$ of the differential of $f^{\circ p}$ at $z_{0}$. By the chain rule, the multiplier is invariant under conjugation: if $\phi \colon S \rightarrow S$ is a biholomorphism and $g = \phi \circ f \circ \phi^{-1}$, then $\phi\left( z_{0} \right)$ is periodic for $g$ with period $p$ and multiplier $\lambda$. In this note, we will assume that $S$ represents either the complex line $\mathbb{C}$ or the Riemann sphere $\widehat{\mathbb{C}}$, and thus the map $f$ will be either entire or rational.

A rational map $f \colon \widehat{\mathbb{C}} \rightarrow \widehat{\mathbb{C}}$ of degree $d \geq 2$ is said to be
\begin{itemize}
\item a \emph{power map} if it is conjugate to $z \mapsto z^{\pm d}$,
\item a \emph{Chebyshev map} if it is conjugate to $\pm T_{d}$, where $T_{d}$ is the $d$th Chebyshev polynomial,
\item a \emph{Latt\`{e}s map} if there exist a $1$\nobreakdash-dimensional complex torus $\mathbb{T}$, a holomorphic map $L \colon \mathbb{T} \rightarrow \mathbb{T}$ and a nonconstant holomorphic map $p \colon \mathbb{T} \rightarrow \widehat{\mathbb{C}}$ that make the following diagram commute:
\begin{center}
\begin{tikzpicture}
\node (M00) at (0,0) {$\mathbb{T}$};
\node (M01) at (1.5,0) {$\mathbb{T}$};
\node (M10) at (0,-1.5) {$\widehat{\mathbb{C}}$};
\node (M11) at (1.5,-1.5) {$\widehat{\mathbb{C}}$};
\draw[myarrow] (M00) to node[above]{$L$} (M01);
\draw[myarrow] (M10) to node[below]{$f$} (M11);
\draw[myarrow] (M00) to node[left]{$p$} (M10);
\draw[myarrow] (M01) to node[right]{$p$} (M11);
\end{tikzpicture}
\end{center}
\end{itemize}
Power maps, Chebyshev maps and Latt\`{e}s maps are called finite quotients of affine maps by Milnor in~\cite{M2006} and exceptional maps by Ji and Xie in~\cite{JX2023}. In this note, we will use the second terminology.

As shown by Milnor in~\cite{M2006}, if $f \colon \widehat{\mathbb{C}} \rightarrow \widehat{\mathbb{C}}$ is a power map, a Chebyshev map or a Latt\`{e}s map, then its multipliers at its periodic points all belong to the ring of integers $\mathcal{O}_{K}$ of some imaginary quadratic field $K \subset \mathbb{C}$. Milnor conjectured that the converse is true. In~\cite{H2022}, the third author proved the conjecture for quadratic rational maps. Ji and Xie later proved the general case:

\begin{theorem}[{\cite[Theorem~1.12]{JX2023}}]
\label{theorem:rational}
Assume that $K \subset \mathbb{C}$ is an imaginary quadratic field and $f \colon \widehat{\mathbb{C}} \rightarrow \widehat{\mathbb{C}}$ is a rational map of degree $d \geq 2$ whose multipliers all lie in $\mathcal{O}_{K}$. Then $f$ is a power map, a Chebyshev map or a Latt\`{e}s map.
\end{theorem}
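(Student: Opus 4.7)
The plan is to follow the overall architecture of Ji and Xie's proof, but to replace their appeal to Rivera-Letelier's non-Archimedean rigidity by a complex-analytic argument based on an algebraic differential equation satisfied by a Koenigs linearizer of $f$. The starting point is arithmetic: since $K$ is imaginary quadratic, Galois conjugation over $\mathbb{Q}$ acts on $K$ as complex conjugation, so $|\lambda|^{2} = N_{K/\mathbb{Q}}(\lambda) \in \mathbb{Z}_{\geq 0}$ for every $\lambda \in \mathcal{O}_{K}$. Consequently every nonzero multiplier satisfies $|\lambda| \geq 1$, and $f$ has no attracting cycle of nonzero multiplier; by Kronecker's theorem, elements of $\mathcal{O}_{K}$ of modulus $1$ are roots of unity, which rules out Cremer points, Siegel disks and Herman rings. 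A Fatou--Shishikura and critical-orbit analysis should leave only superattracting basins in the Fatou set, and passing to an iterate one may assume that $f$ is hyperbolic.

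Next I would choose a repelling periodic point $z_{0}$ of $f$ of period $p$ with multiplier $\lambda \in \mathcal{O}_{K}$, $|\lambda| > 1$, and let $\phi$ be its Koenigs coordinate, normalized by $\phi \circ f^{\circ p} = \lambda \cdot \phi$ near $z_{0}$. For $n$ sufficiently large, the formula $\Phi(w) := f^{\circ np}\bigl(\phi^{-1}(w/\lambda^{n})\bigr)$ is independent of $n$ and defines a holomorphic map $\Phi \colon \mathbb{C} \to \widehat{\mathbb{C}}$ satisfying the functional equation $\Phi(\lambda w) = f^{\circ p}(\Phi(w))$. Every iterated preimage of $0$ under multiplication by $\lambda$ is sent by $\Phi$ to a preperiodic point of $f^{\circ p}$, whose multiplier belongs to $\mathcal{O}_{K}$; via the functional equation this translates into $\Phi'$ taking $\mathcal{O}_{K}$-values on a dense countable subset of $\mathbb{C}$.

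The crucial step, corresponding to the ``differential equation'' of the abstract, is to convert this arithmetic integrality of $\Phi'$ on a discrete set into a genuine algebraic identity $(\Phi')^{2} = P(\Phi)$ for some rational function $P$. Once such an identity is in hand, combining it with the functional equation $\Phi(\lambda w) = f^{\circ p}(\Phi(w))$ forces $\Phi$, up to an affine change of the variable $w$, to be one of the three classical uniformizers $e^{w}$, $\cos w$, or a Weierstrass $\wp$-function, placing $f$ in the power, Chebyshev, or Latt\`{e}s case respectively. The entire map version advertised in the abstract should come from exactly the same construction, since $\Phi$ is entire regardless of whether $f$ was rational or transcendental.

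The main obstacle I foresee is precisely this extraction of the ODE from the integrality of $\Phi'$ on a countable set. Ji and Xie circumvent it by transporting the problem to a Berkovich space and applying Rivera-Letelier's classification of non-Archimedean invariant subsets; producing instead a direct complex-analytic proof that integrality on the $\lambda$-preimages of $0$ suffices to yield a polynomial identity between $\Phi'$ and $\Phi$ is the technical innovation this paper announces. Once that identity is established, the remaining classification of entire solutions of $(\Phi')^{2} = P(\Phi)$ compatible with a multiplicative functional equation is classical and produces exactly the three expected families.
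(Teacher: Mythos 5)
There is a genuine gap, and you name it yourself: the ``crucial step'' of converting integrality on a countable set into an identity such as $(\Phi')^{2} = P(\Phi)$ is exactly the point where your argument stops. A proof outline whose central difficulty is acknowledged as ``the main obstacle I foresee'' is a plan, not a proof. Worse, the mechanism you propose for feeding the arithmetic hypothesis into the analysis does not work as stated: the hypothesis concerns only the multipliers of \emph{periodic} points, whereas iterated preimages of $0$ under $w \mapsto \lambda w$ are sent by $\Phi$ to points that are merely preperiodic. Such points have no multiplier, and the derivative of $f^{\circ p}$ along a strictly preperiodic orbit need not lie in $\mathcal{O}_{K}$, so the claim that $\Phi'$ takes $\mathcal{O}_{K}$-values on a dense countable set does not follow from the hypothesis. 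Separately, your preliminary reduction ``passing to an iterate one may assume that $f$ is hyperbolic'' is both unnecessary and false: no iterate of a non-hyperbolic map is hyperbolic, and Latt\`{e}s maps (which must survive the argument) are not hyperbolic. The paper uses none of these reductions.

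The actual route is quite different and worth internalizing. One first restricts a high iterate of $f$ to an \emph{escaping quadratic-like map} $f^{\circ n} \colon U_{1} \cup U_{2} \to V$ (two univalent branches over $V$), linearizes the first branch so that $f_{1}(z) = \lambda_{1} z$, and then considers the genuine periodic orbits $w_{n}$ obtained as fixed points of $g_{2} \circ g_{1}^{\circ (n-1)}$: these spend $n-1$ steps in the linear branch and one step in the other, so their multipliers are exactly $\rho_{n} = \lambda_{1}^{\,n-1} f_{2}'\left( w_{n} \right)$ and admit the expansion $\rho_{n} = a \lambda_{1}^{\,n-1} + b + o(1)$. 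Discreteness of $\mathcal{O}_{K}$ forces the error term to vanish for large $n$, giving the exact relation $f_{2}'\left( w_{n} \right) = a + b\, f_{2}\left( w_{n} \right)/w_{n}$ on a sequence accumulating in $U_{2}$; the identity principle then yields the first-order \emph{linear} ODE $f_{2}'(z) = a + b\, f_{2}(z)/z$ on all of $U_{2}$ --- not a nonlinear equation of the form $(\Phi')^{2} = P(\Phi)$ for a global linearizer. Applying the same argument to $f^{\circ 2}$ and comparing the two resulting equations forces $b = 0$, so $f_{2}$ is affine, and Ritt's theorem (via the non-commuting affine branches $\alpha_{1}, \alpha_{2}$) identifies $f$ as exceptional. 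The key insight you are missing is the choice of a family of periodic orbits whose multipliers can be computed asymptotically in closed form, so that discreteness of $\mathcal{O}_{K}$ upgrades an asymptotic to an exact identity.
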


In this note, we present a variant of Ji and Xie's proof of Theorem~\ref{theorem:rational}. Our proof only differs from the original one in one of the arguments: where they use a result from non-Archimedean dynamics proved by Rivera-Letelier in~\cite{RL2003}, we consider a differential equation instead. Thus, our main contribution is Proposition~\ref{proposition:escaping}.

We also determine the entire maps with integer multipliers. A nonaffine entire map $f \colon \mathbb{C} \rightarrow \mathbb{C}$ is said to be a \emph{power map} or a \emph{Chebyshev map} if it is polynomial and it is a power map or a Chebyshev map in the previous sense. Equivalently, a nonaffine entire map $f \colon \mathbb{C} \rightarrow \mathbb{C}$ is
\begin{itemize}
\item a power map if it is conjugate to $z \mapsto z^{d}$ for some integer $d \geq 2$,
\item a Chebyshev map if it is conjugate to $\pm T_{d}$ for some integer $d \geq 2$.
\end{itemize}
Also note that Latt\`{e}s maps are not polynomial.

Our arguments to prove Theorem~\ref{theorem:rational} also apply to the case of entire maps. Thus, we obtain the result below, which shows that there is no transcendental entire map whose multipliers all lie in the ring of integers of some imaginary quadratic field.

\begin{theorem}
\label{theorem:entire}
Assume that $K \subset \mathbb{C}$ is an imaginary quadratic field and $f \colon \mathbb{C} \rightarrow \mathbb{C}$ is a nonaffine entire map whose multipliers all lie in $\mathcal{O}_{K}$. Then $f$ is a power map or a Chebyshev map.
\end{theorem}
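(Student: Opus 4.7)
The plan is to transpose the authors' method for Theorem~\ref{theorem:rational} to the entire setting, with the key simplification that the target space $\mathbb{C}$ carries no point at infinity, which should rule out the Latt\`{e}s case from the outset. Let $f\colon\mathbb{C}\to\mathbb{C}$ be a nonaffine entire map with all multipliers in $\mathcal{O}_{K}$. By Bergweiler's theorem on repelling periodic points of transcendental entire maps (or elementary considerations in the polynomial case), $f$ has a repelling periodic point $z_{0}$ of some period $p$, whose multiplier $\lambda\in\mathcal{O}_{K}$ satisfies $|\lambda|>1$. I would form the Koenigs linearizer $\varphi\colon\mathbb{C}\to\mathbb{C}$ characterized by $\varphi(0)=z_{0}$, $\varphi'(0)=1$, and $\varphi(\lambda w)=f^{\circ p}(\varphi(w))$; since $f^{\circ p}$ is entire and $|\lambda|>1$, the local $\varphi$ extends to a genuinely entire function via $\varphi(w)=f^{\circ pn}\bigl(\varphi(\lambda^{-n}w)\bigr)$ for $n$ large enough.

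The heart of the argument is to convert the $\mathcal{O}_{K}$-integrality of every multiplier of $f$ into a rigidity condition on $\varphi$. At any periodic point of $f$ lying in the image of $\varphi$, the multiplier is an algebraic combination of the derivatives of $\varphi$; propagating this constraint backward under $f^{\circ p}$ via the functional equation accumulates integrality conditions at points converging to $0$, and one then applies the entire analog of Proposition~\ref{proposition:escaping} to convert these into a differential equation satisfied by $\varphi$. The entire solutions should fall into three standard families up to affine change of coordinates: $\varphi$ of exponential type, giving $f$ conjugate to $z\mapsto z^{d}$; $\varphi$ of $\cos$-type, giving $f$ conjugate to $\pm T_{d}$; or $\varphi$ related to a Weierstrass $\wp$-function, which would give a Latt\`{e}s map. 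The last case is automatically eliminated here because $\wp$ has poles, so such a $\varphi$ cannot be entire, contradicting the construction above.

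The main obstacle I expect is the derivation of the ODE, namely establishing an entire version of Proposition~\ref{proposition:escaping}. In the rational case the argument presumably uses the compactness of $\widehat{\mathbb{C}}$ and equidistribution of periodic orbits on the Julia set; for entire $f$, especially in the transcendental regime, one would instead invoke Eremenko's theorem on the escaping set and Bergweiler's density of repelling periodic points in the Julia set, in order to guarantee a sufficiently rich family of periodic points of $f$ inside the image of $\varphi$. Once the ODE is in hand, the classification of its entire solutions and the elimination of the Latt\`{e}s case yield the conclusion.
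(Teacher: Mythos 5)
Your high-level outline (linearize at a repelling point, turn integrality of multipliers into an ODE, classify via Ritt, exclude Latt\`{e}s because $\wp$ has poles) matches the paper's in spirit, but the proposal leaves the central argument unproved and misplaces where the differential equation lives. The decisive object in the paper is not the Koenigs linearizer of $f$ at a single repelling cycle, but an \emph{escaping quadratic-like restriction} $f^{\circ n}\colon U \rightarrow V$ (Bergweiler's Proposition~B.3, whose entire case rests on the Ahlfors five islands theorem --- this is the input you need, not merely density of repelling periodic points). That restriction has two inverse branches $g_{1}, g_{2}\colon V \rightarrow U_{1}, U_{2}$ with distinct fixed points; after linearizing the first branch so that $f_{1}(z) = \lambda_{1} z$, one considers the periodic points $w_{n}$ fixed by $g_{2}\circ g_{1}^{\circ(n-1)}$, whose multipliers are exactly $\rho_{n} = \lambda_{1}^{n-1} f_{2}'\left( w_{n} \right)$. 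The asymptotic $\rho_{n} = a\lambda_{1}^{n-1} + b + o(1)$ together with the discreteness of $\mathcal{O}_{K}$ forces $\rho_{n} = a\lambda_{1}^{n-1} + b$ exactly for large $n$, and the identity principle then yields the ODE $f_{2}'(z) = a + b\,f_{2}(z)/z$ for the \emph{second branch} $f_{2}$, not for $\varphi$. Your sentence ``one then applies the entire analog of Proposition~\ref{proposition:escaping}'' is precisely the statement to be proved, so as written the argument is circular at its core.

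A second, smaller misconception: the $\exp$/$\cos$/$\wp$ trichotomy does not arise from classifying entire solutions of the ODE. The ODE is used only to show that $f_{2}$ is \emph{affine} (by writing the analogous equation for $f^{\circ 2}$ and comparing coefficients at the fixed point $z_{2}$ to force $b = 0$). Once both branches are affine, one has a semiconjugacy $\phi\circ\alpha_{1} = f^{\circ n}\circ\phi = \phi\circ\alpha_{2}$ with two non-commuting affine maps, which produces a nontrivial period of $\phi$, and Ritt's theorem then classifies $\phi$ and shows $f$ is exceptional. The one step you handle correctly is the last: for entire $f$ the Latt\`{e}s case is indeed excluded because Latt\`{e}s maps are not polynomial (equivalently, $\wp$ is not entire). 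To complete your proof you would need to (i) quote Bergweiler's existence of an escaping quadratic-like restriction for nonaffine entire maps, and (ii) actually carry out the multiplier computation and the discreteness argument sketched above.
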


\begin{remark}
In fact, our proof shows that the conclusions of Theorems~\ref{theorem:rational} and~\ref{theorem:entire} still hold if one only assumes that there is some open set $U$ that intersects the Julia set $\mathcal{J}_{f}$ of $f$ and such that the multipliers of $f$ at its periodic points in $U$ all lie in $\mathcal{O}_{K}$. This is also true of Ji and Xie's proof of Theorem~\ref{theorem:rational}.
\end{remark}

After writing this note, the third author obtained the following stronger version of Theorem~\ref{theorem:rational}:

\begin{theorem}[{\cite[Main Theorem]{H2023}}]
\label{theorem:huguin}
Assume that $K \subset \mathbb{C}$ is a number field and $f \colon \widehat{\mathbb{C}} \rightarrow \widehat{\mathbb{C}}$ is a rational map of degree $d \geq 2$ whose multipliers all lie in $K$. Then $f$ is a power map, a Chebyshev map or a Latt\`{e}s map.
\end{theorem}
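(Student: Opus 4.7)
My plan is to reduce Theorem~\ref{theorem:huguin} to Theorem~\ref{theorem:rational} by showing that if the multipliers of $f$ all lie in a number field $K$, then $K$ must contain an imaginary quadratic subfield $K_{0}$ such that every multiplier of $f$ already lies in $\mathcal{O}_{K_{0}}$. The basic tools are Galois conjugation of $f$, archimedean size bounds on multipliers coming from complex dynamics, and non-Archimedean bounds coming from Berkovich dynamics.

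As a preliminary step, one shows that after conjugating $f$ by a Möbius transformation, one may assume $f$ is defined over a number field $F$. This is standard: Milnor's multiplier invariants define a finite map from the moduli space of degree $d$ rational maps to an affine space, so knowing infinitely many multipliers in $K \subset \overline{\mathbb{Q}}$ forces the conjugacy class of $f$ to be a $\overline{\mathbb{Q}}$-point of moduli space, and a representative can be chosen over some finite extension $F/K$.

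The core of the proof is then a place-by-place analysis. For each archimedean place of $F$ corresponding to an embedding $\sigma \colon F \hookrightarrow \mathbb{C}$, the Galois conjugate $f^{\sigma}$ is again a rational map of degree $d$ whose multipliers are the $\sigma$-images of the multipliers of $f$. Since $f^{\sigma}$ has infinitely many repelling cycles, infinitely many of the $\sigma(\lambda)$ satisfy $|\sigma(\lambda)| > 1$. At each non-Archimedean place $v$, the induced dynamics on $\mathbb{P}^{1,\mathrm{an}}_{\mathbb{C}_{v}}$ likewise possesses repelling cycles whenever its Julia set is non-trivial, forcing $|\lambda|_{v} \geq 1$ along infinitely many periodic points. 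A product-formula argument, combined with an analogue of Proposition~\ref{proposition:escaping} adapted to each non-Archimedean place, then shows that every multiplier is an algebraic integer and that no embedding of $F$ can send a generic multiplier outside the closed unit disk; the usual count of real and complex embeddings then forces the field generated by the multipliers to be $\mathbb{Q}$ or an imaginary quadratic extension.

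Once all multipliers are known to lie in $\mathcal{O}_{K_{0}}$ for some imaginary quadratic $K_{0} \subset K$, Theorem~\ref{theorem:rational} applies and shows that $f$ is a power, Chebyshev or Lattès map. The main obstacle in carrying out this plan is the non-Archimedean side: one needs a version of Ji and Xie's non-Archimedean analysis (or of the differential-equation substitute introduced in the present note) that functions uniformly across all finite places of $F$, together with careful product-formula bookkeeping to combine the place-by-place estimates into a global integrality statement. This last synthesis is presumably the technical heart of \cite{H2023}.
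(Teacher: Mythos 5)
This statement is quoted in the note from \cite{H2023} as a subsequent strengthening of Theorem~\ref{theorem:rational}; the note itself contains no proof of it, so there is nothing internal to compare against. Judged on its own, your proposal has a genuine gap: it reduces the theorem to the intermediate claim that all multipliers lie in $\mathcal{O}_{K_{0}}$ for some imaginary quadratic $K_{0}$, but that claim is essentially as strong as the theorem itself (it is exactly what the conclusion gives you a posteriori, since power, Chebyshev and Latt\`{e}s maps have multipliers in $\mathbb{Z}$ or in an order of an imaginary quadratic field), and none of the steps you sketch actually establishes it. For integrality you need \emph{upper} bounds $\lvert \lambda \rvert_{v} \leq 1$ at every finite place for \emph{every} multiplier, whereas the existence of repelling cycles in the Berkovich Julia set only produces \emph{lower} bounds along \emph{some} cycles; and multipliers of maps defined over $\overline{\mathbb{Q}}$ are in general not algebraic integers (already $z^{2} + c$ with $c \in \mathbb{Q} \setminus \mathbb{Z}$ has fixed points with non-integral multipliers), so this cannot follow from generic bookkeeping. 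Likewise, knowing that infinitely many $\sigma(\lambda)$ have modulus greater than $1$ at each archimedean embedding puts no constraint on the signature of the field generated by the multipliers, so the ``usual count of real and complex embeddings'' does not force $K_{0}$ to be $\mathbb{Q}$ or imaginary quadratic. You acknowledge this yourself by deferring the ``technical heart'' to \cite{H2023}, which means the proposal is a plan rather than a proof.

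For the record, the argument of \cite{H2023} does not proceed by first forcing the multipliers into an imaginary quadratic ring and then invoking Theorem~\ref{theorem:rational}. It follows the same dynamical skeleton as Section~\ref{section:proof} of this note (escaping quadratic-like restriction, the special sequence $\rho_{n} = a \lambda_{1}^{n-1} + b + o(1)$), and replaces the discreteness of $\mathcal{O}_{K}$ in $\mathbb{C}$ --- which is what makes the Claim work and which fails for a general number field, dense in $\mathbb{C}$ or in $\mathbb{R}$ --- by an arithmetic argument using heights and all the places of $K$ simultaneously to show that the error terms $\varepsilon_{n}$ still vanish eventually. If you want to pursue your reduction honestly, the work you would have to do is precisely that global analysis of the sequence $\left( \rho_{n} \right)_{n \geq 1}$, at which point the detour through an imaginary quadratic subfield becomes unnecessary.
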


Very recently, Ji, Xie and Zhang generalized Theorem~\ref{theorem:huguin}, proving the following:

\begin{theorem}[{\cite[Theorem~1.4]{JXZ2023}}]
Assume that $K \subset \mathbb{C}$ is a number field and $f \colon \widehat{\mathbb{C}} \rightarrow \widehat{\mathbb{C}}$ is a rational map of degree $d \geq 2$ such that, for every multiplier $\lambda$ of $f$, there is an integer $n \geq 1$ such that $\lvert \lambda \rvert^{n} \in K$. Then $f$ is a power map, a Chebyshev map or a Latt\`{e}s map.
\end{theorem}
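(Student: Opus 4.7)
The plan is to reduce the statement to Theorem~\ref{theorem:huguin}: I want to show that the weaker hypothesis --- that for every multiplier $\lambda$ of $f$ there exists $n_\lambda \geq 1$ with $\lvert\lambda\rvert^{n_\lambda} \in K$ --- already forces all multipliers of $f$ to lie in some fixed number field $K'$. Once this is established, Huguin's theorem applied with the field $K'$ immediately yields that $f$ is a power map, a Chebyshev map, or a Latt\`{e}s map.

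The crux of the matter is that the hypothesis is strictly weaker than Huguin's: $\lvert\lambda\rvert^{n} \in K$ says only that $\lvert\lambda\rvert$ is a real algebraic number and places no restriction at all on the argument of $\lambda$, so \emph{a priori} $\lambda$ need not even be algebraic. To upgrade to genuine algebraicity, I would invoke the rigidity of the multiplier spectrum: by McMullen's theorem, outside the flexible Latt\`{e}s locus (which is itself exceptional) the infinite multiplier spectrum of $f$ determines $f$ up to finite ambiguity. Combined with the constraint $\lvert\lambda\rvert \in \overline{\mathbb{Q}}$ at every periodic point and the algebraic relations satisfied by the multipliers as symmetric functions in the roots of $f^{\circ n}(z) - z$, this rigidity should confine the possible spectra of $f$ to a countable set and, ultimately, force $f$ itself to be defined over $\overline{\mathbb{Q}}$. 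Once $f$ is defined over a number field $K'$, every multiplier --- being a symmetric function of a Galois-stable cycle of periodic points --- automatically lies in $K'$, and Theorem~\ref{theorem:huguin} delivers the conclusion.

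The main obstacle is precisely the passage from ``$\lvert\lambda\rvert$ algebraic for every $\lambda$'' to ``$f$ algebraic''. The exponents $n_\lambda$ are not uniformly controlled, and the hypothesis leaves the arguments of the multipliers completely free, so a pure rigidity argument may well not suffice. I expect that the real work requires an arithmetic-dynamical equidistribution argument --- in the spirit of Yuan-type adelic equidistribution or the non-Archimedean techniques used in Ji and Xie's original proof of Theorem~\ref{theorem:rational} --- which leverages the constraint on $\lvert\lambda\rvert^{n_\lambda}$ at infinitely many periodic points, combined with a product-formula argument, to produce the algebraicity of $f$ itself.
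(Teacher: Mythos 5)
This statement is quoted from \cite{JXZ2023}; the present paper offers no proof of it, so there is no internal argument to compare against. Judged on its own, your proposal is not a proof but a statement of the difficulty: the entire content of the theorem is the passage from ``$\lvert\lambda\rvert^{n_\lambda}$ algebraic for each $\lambda$, with uncontrolled $n_\lambda$ and no constraint on $\arg\lambda$'' to exceptionality, and you explicitly leave that step open (``I expect that the real work requires an arithmetic-dynamical equidistribution argument\dots''). McMullen's rigidity theorem cannot launch this step: it requires knowledge of the multiplier spectrum itself, whereas your hypothesis only constrains the moduli of the multipliers, which determine neither the spectrum nor any algebraic relation among the periodic points.

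There is also a concrete error in the final reduction. Even if you could show that $f$ is defined over a number field $K'$, it is false that every multiplier then lies in $K'$: the Galois group permutes the cycles of a given period, so an individual multiplier lies in $\overline{K'}$ but typically generates extensions of $K'$ of unbounded degree as the period grows. For instance, $z \mapsto z^{2} - 1$ is defined over $\mathbb{Q}$ yet is not exceptional, so by Theorem~\ref{theorem:huguin} its multipliers cannot all lie in any single number field. Hence ``$f$ algebraic'' does not feed into Theorem~\ref{theorem:huguin}, and the proposed reduction collapses even granting the missing middle step. The actual proof in \cite{JXZ2023} is a substantial new argument about the $\mathbb{Q}$-vector space spanned by the characteristic exponents $\log\lvert\lambda\rvert$; it builds on the circle of ideas around Theorem~\ref{theorem:rational} but is not a formal corollary of Theorem~\ref{theorem:huguin}.
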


In contrast, the first and third authors together with Gorbovickis later showed that Theorem~\ref{theorem:entire} does not generalize to the case of rational multipliers:

\begin{theorem}[{\cite[Theorem~4]{BGH2023}}]
Assume that $K \subset \mathbb{C}$ is a number field that is not contained in $\mathbb{R}$. Then there exist transcendental entire maps $f \colon \mathbb{C} \rightarrow \mathbb{C}$ whose multipliers all lie in $K$.
\end{theorem}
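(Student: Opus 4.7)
This is an existence theorem; the plan is to construct, for each number field $K \subset \mathbb{C}$ with $K \not\subset \mathbb{R}$, an explicit transcendental entire map $f \colon \mathbb{C} \to \mathbb{C}$ whose multipliers all lie in $K$.

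The natural mechanism, modeled on the semiconjugation construction that produces power, Chebyshev and Latt\`{e}s maps, is the following. Using the hypothesis $K \not\subset \mathbb{R}$, pick $\tau \in K \setminus \mathbb{R}$, and consider the covering $\pi \colon \mathbb{C} \to \mathbb{C}^{*}$ given by $\pi(w) = \exp(\tau w)$, with deck group $\Lambda := (2\pi i/\tau)\mathbb{Z}$. Any entire $L \colon \mathbb{C} \to \mathbb{C}$ of the form $L(w) = kw + \tilde{H}(\pi(w))$, with $k \in \mathbb{Z}_{\geq 2}$ and $\tilde{H}$ entire, satisfies $L(w + \lambda) = L(w) + k\lambda$ for every $\lambda \in \Lambda$, and hence descends through $\pi$ to an entire map $f(z) = z^{k} \exp(\tau \tilde{H}(z))$ on $\mathbb{C}$. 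This $f$ is transcendental whenever $\tilde{H}$ is nonconstant, and it has the superattracting fixed point $z = 0$ of multiplier $0 \in K$.

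The remaining multipliers are read off from $L$: for a periodic point $z_{0} \in \mathbb{C}^{*}$ of $f$ of period $p$, there exists $w_{0} \in \pi^{-1}(z_{0})$ with $L^{p}(w_{0}) \in w_{0} + \Lambda$, and the relation $d\pi|_{L^{p}(w_{0})} = d\pi|_{w_{0}}$ gives that the $f$-multiplier at $z_{0}$ equals $(L^{p})'(w_{0}) = \prod_{j=0}^{p-1}\bigl(k + \tau f^{j}(z_{0}) \tilde{H}'(f^{j}(z_{0}))\bigr)$. Setting $P(z) := k + \tau z \tilde{H}'(z)$ and choosing $\tilde{H}$ to be a polynomial with $K$-coefficients puts $P \in K[z]$, so the multiplier becomes the symmetric product $\prod_{j} P(f^{j}(z_{0}))$ in the cycle $\{z_{0},\ldots,z_{p-1}\}$ with $K$-coefficients.

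The main obstacle is to ensure this symmetric product actually belongs to $K$ for every cycle, despite the cycle elements themselves being transcendental. Overcoming it requires choosing $\tilde{H}$ so that a telescoping or resultant-style identity along the cycle collapses $\prod_{j} P(f^{j}(z_{0}))$ to a quantity depending only on $k$, $p$, and the integer $m$ for which $L^{p}(w_{0}) = w_{0} + 2\pi i m/\tau$, which is \emph{a priori} in $K$. Verifying this cancellation uniformly in $p$ and across all cycles --- and confirming that the resulting $f$ is genuinely transcendental and not a disguised polynomial --- is the technical heart of the construction, and it is at this point that the nonreal element $\tau \in K$ (guaranteed by $K \not\subset \mathbb{R}$) plays an essential algebraic role.
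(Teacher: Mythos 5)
Your proposal does not reach a proof: the decisive step --- arranging that the symmetric product $\prod_{j=0}^{p-1} P\bigl( f^{\circ j}\left( z_{0} \right) \bigr)$ lies in $K$ for \emph{every} cycle --- is only announced as ``the technical heart'' and is never carried out. Worse, for the family you set up it cannot be carried out by the telescoping you suggest. A telescoping identity would require $P(z) = \mu \, Q(f(z)) / Q(z)$ for some function $Q$ and constant $\mu$, so that the product over a cycle collapses to $\mu^{p}$; but $P(z) = k + \tau z \tilde{H}^{\prime}(z)$ is a polynomial while $f(z) = z^{k} \exp\bigl( \tau \tilde{H}(z) \bigr)$ is transcendental, so $Q \circ f$ would have to equal a polynomial multiple of $Q$, forcing $Q$, and hence $P$, to be constant, i.e.\ $\tilde{H}^{\prime} \equiv 0$ and $f(z) = c z^{k}$, which is not transcendental. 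This is the same rigidity phenomenon that underlies Ritt's Lemma~\ref{lemma:ritt}: semiconjugacies to affine-like lifts through a periodic covering produce exceptional maps, not transcendental ones. Without a concrete $\tilde{H}$ and a verified cancellation, the points of a cycle are (generically) transcendental numbers and there is no reason whatsoever for the product to lie in $K$. A secondary inaccuracy: the hypothesis $K \not\subset \mathbb{R}$ plays no ``essential algebraic role'' through a nonreal exponent $\tau$; choosing $\tau \in K$ nonreal does nothing to constrain the multipliers.

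The result is quoted in this note from~\cite{BGH2023} without proof, and the construction there is of an entirely different nature: $f$ is obtained as a locally uniform limit of a sequence of polynomials built inductively, where each successive correction vanishes to high order along the finitely many cycles already produced (so that those cycles and their multipliers, already placed in $K$, are preserved) while the multipliers of the newly created cycles are adjusted into $K$. The hypothesis $K \not\subset \mathbb{R}$ enters because such a number field is dense in $\mathbb{C}$, which is what makes it possible to choose the corrections simultaneously small (so that the limit is entire and transcendental) and subject to the arithmetic constraints. To salvage your argument you would need to abandon the semiconjugation ansatz in favor of an approximation scheme of this kind.
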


In Section~\ref{section:exceptional}, we present a characterization of power maps, Chebyshev maps and Latt\`{e}s maps. In Section~\ref{section:proof}, we present our proof of Theorems~\ref{theorem:rational} and~\ref{theorem:entire}.

\begin{acknowledgments}
The authors would like to thank the anonymous referee for his valuable comments, and in particular for simplifying our proof and suggesting that our arguments might also apply to the case of entire maps.
\end{acknowledgments}

\section{Exceptional maps and escaping quadratic-like maps}
\label{section:exceptional}

Throughout this section, we assume that $S$ denotes either $\mathbb{C}$ or $\widehat{\mathbb{C}}$. We say that a holomorphic map $f \colon S \rightarrow S$ is \emph{nonlinear} if it is neither constant nor injective. In other words, the nonlinear holomorphic maps $f \colon S \rightarrow S$ are precisely the nonaffine entire maps if $S = \mathbb{C}$ and the rational maps of degree $d \geq 2$ if $S = \widehat{\mathbb{C}}$.

\subsection{Exceptional maps}

We say that a nonlinear holomorphic map $f \colon S \rightarrow S$ is \emph{exceptional} if it is a power map, a Chebyshev map or a Latt\`{e}s map.

Ritt obtained the following characterization of exceptional maps:

\begin{lemma}[\cite{R1922}]
\label{lemma:ritt}
Suppose that $f \colon S \rightarrow S$ is a nonlinear holomorphic map, $\phi \colon \mathbb{C} \rightarrow S$ is a nonconstant holomorphic map, $\alpha \colon \mathbb{C} \rightarrow \mathbb{C}$ is an affine map that is not a translation and $\tau \colon \mathbb{C} \rightarrow \mathbb{C}$ is a nontrivial translation such that \[ \phi \circ \alpha = f \circ \phi \quad \text{and} \quad \phi \circ \tau = \phi \, \text{.} \] Then $f$ is exceptional.
\end{lemma}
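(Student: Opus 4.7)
The plan is to normalize the data and analyze the group of periods of $\phi$. Up to conjugating $\alpha$ by a translation (and updating $\phi$ accordingly), I may assume $\alpha(z) = az$ for some $a \in \mathbb{C}$; the hypotheses that $\alpha$ is not a translation and that $f \circ \phi = \phi \circ \alpha$ is nonconstant give $a \neq 1$ and $a \neq 0$. Let $\Lambda := \{\sigma \in \mathbb{C} : \phi(\cdot + \sigma) = \phi\}$, a closed additive subgroup of $\mathbb{C}$ containing the translation vector of $\tau$, hence nontrivial. Because $\phi$ is holomorphic and nonconstant, $\Lambda$ cannot contain any real line (otherwise $\phi$ would be constant along a one-parameter family of parallel lines, contradicting holomorphy), so $\Lambda$ is discrete. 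A direct check using $\phi \circ \alpha = f \circ \phi$ yields $a \Lambda \subseteq \Lambda$.

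If $\Lambda$ has rank two, then $\mathbb{T} := \mathbb{C}/\Lambda$ is a $1$\nobreakdash-dimensional complex torus and $\phi$ descends to a nonconstant holomorphic map $p \colon \mathbb{T} \to S$. Compactness of $\mathbb{T}$ forces $S = \widehat{\mathbb{C}}$, and $a \Lambda \subseteq \Lambda$ makes $\alpha$ descend to a holomorphic $L \colon \mathbb{T} \to \mathbb{T}$ satisfying $p \circ L = f \circ p$, which exhibits $f$ as a Latt\`{e}s map. If $\Lambda = \mathbb{Z} \omega$ has rank one, then $a \omega = n \omega$ for some $n \in \mathbb{Z}$, so $a$ is an integer; the value $a = -1$ is ruled out because $\alpha \circ \alpha = \mathrm{id}$ would force $f^{\circ 2}$ to agree with the identity on the open set $\phi(\mathbb{C})$ and hence everywhere, contradicting $f$ being nonlinear. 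Thus $\lvert a \rvert \geq 2$. Rescaling so that $\omega = 1$, the map $\phi$ factors as $\phi = \psi \circ E$ with $E(z) := e^{2 \pi i z}$ and $\psi \colon \mathbb{C}^{*} \to S$ holomorphic, and the semi-conjugacy becomes $\psi \circ p_{a} = f \circ \psi$, where $p_{a}(w) = w^{a}$.

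The main obstacle lies in promoting $\psi$ to a rational map $\widehat{\mathbb{C}} \to \widehat{\mathbb{C}}$. I would do so by exploiting the growth of $\phi$: iterating the relation $f \circ \phi = \phi \circ \alpha$ and using the degree of $f$ yields a Nevanlinna-type estimate $T(\lvert a \rvert r, \phi) = (\deg f) \cdot T(r, \phi) + O(1)$, which forces $\phi$ to have finite order. A periodic meromorphic function of finite order is automatically of the form $R \circ E$ for a rational $R$, since an essential singularity of $\psi$ at $0$ or $\infty$ would produce super-exponential growth of $\phi$ in a horizontal half-plane. Once $\psi$ is rational, the identity $\psi \circ p_{a} = f \circ \psi$ becomes a genuine semi-conjugacy between rational maps, and the classical Fatou--Ritt classification of rational maps semi-conjugate to a power map forces $f$ to be a power map or a Chebyshev map. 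Everything but this global analyticity step for $\psi$ is essentially formal.
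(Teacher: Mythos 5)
The paper offers no proof of this lemma---it simply cites Ritt---so your proposal has to stand on its own. Its structural part is sound and is indeed Ritt's skeleton: normalizing $\alpha$ to $z \mapsto az$, showing the period group $\Lambda$ is a nontrivial discrete subgroup with $a\Lambda \subseteq \Lambda$, obtaining a Latt\`{e}s map when $\Lambda$ has rank $2$, and reducing the rank-$1$ case to a semi-conjugacy $\psi \circ p_{a} = f \circ \psi$ on $\mathbb{C}^{*}$. The gap is exactly at the step you flag as the main obstacle. The assertion ``a periodic meromorphic function of finite order is of the form $R \circ E$ with $R$ rational'' is false: the function $\phi(z) = \sum_{n \geq 0} e^{-e^{n}} e^{2\pi i n z}$ is entire, $1$-periodic and of order $1$ (its maximum modulus on $\lvert z \rvert \leq r$ is about $\exp(2\pi r \log r)$), yet $\phi = \psi \circ E$ with $\psi(w) = \sum_{n \geq 0} e^{-e^{n}} w^{n}$ transcendental entire, so $\psi$ has an essential singularity at $\infty$. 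Your justification conflates ``grows faster than $e^{Cy}$ for every $C$ in a half-plane'' (which an essential singularity does force) with ``has infinite order''; order $1$ with infinite type already exhibits the former. The sharper estimate $T(\lvert a \rvert r, \phi) = d\, T(r,\phi) + O(1)$ does give $T(r) \asymp r^{\rho}$ with $\rho = \log d / \log \lvert a \rvert$, hence finite order \emph{and} finite type, but when $\rho > 1$ this still permits $\psi$ to grow like $\lvert w \rvert^{c \log \lvert w \rvert}$ near $\infty$, so growth alone does not yield rationality of $\psi$; closing this step requires a genuine analysis of the functional equation, which is the actual content of Ritt's paper.

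Two further points in the same step. First, the lemma allows $S = \mathbb{C}$ with $f$ transcendental entire, in which case $\deg f$ is undefined and your Nevanlinna identity fails; by Clunie's theorem one instead gets $T(\lvert a \rvert r, \phi)/T(r,\phi) \to \infty$, so $\phi$ has infinite order, and one must separately turn this into a contradiction (exceptional entire maps are polynomials, so transcendental $f$ has to be excluded, not ignored). Second, when $S = \widehat{\mathbb{C}}$ the map $\psi$ may a priori have poles accumulating at $0$ or $\infty$; a maximum-modulus bound in a half-plane says nothing about this, and one must also control the pole-counting function. The final appeal to the Fatou--Ritt classification of rational maps semi-conjugate to $w \mapsto w^{a}$ is fine once $\psi$ is known to be rational, but that is precisely what has not been established.
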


\begin{remark}
In fact, Ritt is interested in the equation $\phi \circ \alpha = f \circ \phi$, with $f \colon \widehat{\mathbb{C}} \rightarrow \widehat{\mathbb{C}}$ a rational map, $\alpha \colon \mathbb{C} \rightarrow \mathbb{C}$ a nontrivial homothety about the origin and $\phi \colon \mathbb{C} \rightarrow \widehat{\mathbb{C}}$ a periodic and nonconstant holomorphic map, and he seeks to find $\phi$. As he points out in the last sentence of his introduction, his arguments also apply if $f \colon \mathbb{C} \rightarrow \mathbb{C}$ is an entire map and $\phi \colon \mathbb{C} \rightarrow \mathbb{C}$ is a periodic and nonconstant entire map. We may also take $\alpha \colon \mathbb{C} \rightarrow \mathbb{C}$ to be any affine map that is not a translation, conjugating $\alpha$ if necessary to reduce to the previous situation. Finally, one easily deduces from the form of $\phi$ found by Ritt that $f$ is necessarily exceptional if it is nonlinear.
\end{remark}

The following generalization of Lemma~\ref{lemma:ritt} is essentially due to Ji and Xie (compare~\cite[Lemma~2.9]{JX2023}).

\begin{lemma}
\label{lemma:rittBis}
Suppose that $f \colon S \rightarrow S$ is a nonlinear holomorphic map, $\phi \colon \mathbb{C} \rightarrow S$ is a nonconstant holomorphic map and $\alpha_{1} \colon \mathbb{C} \rightarrow \mathbb{C}$ and $\alpha_{2} \colon \mathbb{C} \rightarrow \mathbb{C}$ are affine maps that do not commute and such that \[ \phi \circ \alpha_{1} = f \circ \phi = \phi \circ \alpha_{2} \, \text{.} \] Then $f$ is exceptional.
\end{lemma}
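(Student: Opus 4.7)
The plan is to reduce Lemma~\ref{lemma:rittBis} to Lemma~\ref{lemma:ritt}. To do this, I need to produce, from the data $(\alpha_1, \alpha_2)$, both a nontrivial translation $\tau \colon \mathbb{C} \to \mathbb{C}$ with $\phi \circ \tau = \phi$ and an affine map $\alpha \colon \mathbb{C} \to \mathbb{C}$ which is not a translation and satisfies $\phi \circ \alpha = f \circ \phi$.

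First I would observe that each $\alpha_i$ must in fact be an invertible affine map: a constant $\alpha_i$ would make $f \circ \phi = \phi \circ \alpha_i$ constant, and since $f$ is nonlinear this would force $\phi$ to be constant, contradicting the hypothesis. Next, since any two translations of $\mathbb{C}$ commute, the non-commutation hypothesis forces at least one of $\alpha_1$ and $\alpha_2$, say $\alpha_1$, to be an affine map that is \emph{not} a translation; the relation $\phi \circ \alpha_1 = f \circ \phi$ then supplies a suitable $\alpha := \alpha_1$ for Lemma~\ref{lemma:ritt}.

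To produce $\tau$, I would compose the two given intertwining relations in opposite orders to obtain
\[ \phi \circ \alpha_1 \circ \alpha_2 = f \circ \phi \circ \alpha_2 = f \circ f \circ \phi = f \circ \phi \circ \alpha_1 = \phi \circ \alpha_2 \circ \alpha_1 \text{,} \]
so that $\tau := \left( \alpha_1 \circ \alpha_2 \right) \circ \left( \alpha_2 \circ \alpha_1 \right)^{-1}$ automatically satisfies $\phi \circ \tau = \phi$. The key elementary observation is that for any two invertible affine maps of $\mathbb{C}$, the two composites $\alpha_1 \circ \alpha_2$ and $\alpha_2 \circ \alpha_1$ share the same linear part (the product of the linear parts of $\alpha_1$ and $\alpha_2$), so $\tau$ has linear part $1$ and is therefore a translation. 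Moreover, $\tau$ is the identity if and only if $\alpha_1$ and $\alpha_2$ commute, which is excluded by hypothesis; hence $\tau$ is a nontrivial translation.

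With $\alpha = \alpha_1$ and this $\tau$ in hand, Lemma~\ref{lemma:ritt} applies directly and yields that $f$ is exceptional. I do not foresee a serious obstacle here: the whole argument rests on the elementary observation that the commutator of two invertible affine maps of $\mathbb{C}$ is a translation, which is nontrivial precisely when the two maps fail to commute, together with the trivial remark that two translations always commute.
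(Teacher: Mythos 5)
Your proof is correct and takes essentially the same route as the paper's: reduce to Lemma~\ref{lemma:ritt} by noting that one of the $\alpha_i$ is not a translation and by exhibiting a nontrivial translation $\tau$ with $\phi \circ \tau = \phi$. The only (cosmetic) difference is your choice of $\tau = \left( \alpha_1 \circ \alpha_2 \right) \circ \left( \alpha_2 \circ \alpha_1 \right)^{-1}$, derived from $\phi \circ \alpha_1 \circ \alpha_2 = \phi \circ \alpha_2 \circ \alpha_1$, whereas the paper uses $\alpha_{1} \circ \alpha_{2}^{-1} \circ \alpha_{1} \circ \alpha_{2} \circ \left( \alpha_{1}^{-1} \right)^{\circ 2}$, derived from $\phi \circ \alpha_1 \circ \alpha_2 = \phi \circ \alpha_1^{\circ 2}$; both are translations for the same reason (the linear parts commute) and are nontrivial precisely because $\alpha_1$ and $\alpha_2$ do not commute.
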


\begin{proof}
Note that $\alpha_{1}$ or $\alpha_{2}$ is not a translation as, otherwise, they would commute. Also note that $\alpha_{1}$ and $\alpha_{2}$ are both nonconstant because $f$ and $\phi$ are not constant. Define the affine map \[ \tau = \alpha_{1} \circ \alpha_{2}^{-1} \circ \alpha_{1} \circ \alpha_{2} \circ \left( \alpha_{1}^{-1} \right)^{\circ 2} \, \text{.} \] Then $\tau$ is a translation as the linear endomorphism associated with a composition of affine maps equals the composition of the associated linear endomorphisms and linear endomorphisms of $\mathbb{C}$ commute. Also note that $\tau$ is not the identity map as, otherwise, $\alpha_{1}$ and $\alpha_{2}$ would commute. Thus, $\tau \colon \mathbb{C} \rightarrow \mathbb{C}$ is a nontrivial translation. Moreover, we have \[ \phi \circ \alpha_{1} \circ \alpha_{2} = f \circ \phi \circ \alpha_{2} = f^{\circ 2} \circ \phi = f \circ \phi \circ \alpha_{1} = \phi \circ \alpha_{1}^{\circ 2} \, \text{,} \] and hence \[ \phi \circ \tau = \phi \circ \alpha_{1} \circ \alpha_{2}^{-1} \circ \alpha_{1} \circ \alpha_{2} \circ \left( \alpha_{1}^{-1} \right)^{\circ 2} = \phi \circ \alpha_{1} \circ \alpha_{2} \circ \left( \alpha_{1}^{-1} \right)^{\circ 2} = \phi \, \text{.} \] Therefore, $f$ is exceptional by Lemma~\ref{lemma:ritt}, and the lemma is proved.
\end{proof}

\subsection{Escaping quadratic-like maps}

An \emph{escaping quadratic-like map} is a holomorphic covering map $f \colon U \rightarrow V$ of degree $2$, with $U, V$ nonempty open subsets of $\mathbb{C}$ such that $U \Subset V$ and $V$ is simply connected. In this situation, note that $U$ has two connected components $U_{1}$ and $U_{2}$, which are mapped biholomorphically onto $V$ by $f$.

\begin{remark}
The notion of escaping quadratic-like map is related to the well-known one of quadratic-like map as follows: Recall that a \emph{quadratic-like map} is a proper holomorphic map $f \colon V \rightarrow W$ of degree $2$, with $V \Subset W$ nonempty simply connected open subsets of $\mathbb{C}$. In this situation, $f \colon V \rightarrow W$ has a unique critical point $\gamma \in V$. If $f(\gamma) \in W \setminus V$, then $f \colon f^{-1}(V) \rightarrow f^{-1}(W)$ is an escaping quadratic-like map.
\end{remark}

We shall use the result below, which was proved by Bergweiler. His proof relies on a weak version of the Ahlfors five islands theorem. Here, we provide a proof in the particular case of rational maps, which follows Ji and Xie's proof of Theorem~\ref{theorem:rational} (compare~\cite[Proposition~2.6]{JX2023}). We refer the reader to Bergweiler's article for a proof of the general case.

\begin{lemma}[{\cite[Proposition~B.3]{B2000}}]
\label{lemma:restriction}
Suppose that $f \colon S \rightarrow S$ is a nonlinear holomorphic map. Then there exist an integer $n \geq 1$ and open subsets $U, V$ of $\mathbb{C}$ such that $f^{\circ n} \colon U \rightarrow V$ is an escaping quadratic-like map.
\end{lemma}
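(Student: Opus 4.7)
The plan is to produce, for some iterate $g = f^{\circ n}$, a quadratic-like restriction whose critical value escapes; by the remark preceding the lemma, this then yields the desired escaping quadratic-like map. My construction uses the Poincar\'e function at a repelling fixed point of $f$.

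First I would choose a repelling periodic point $p \in \mathcal{J}_{f}$ lying in $\mathbb{C}$, which exists by Fatou's theorem if $f$ is rational and by Baker's theorem if $f$ is transcendental entire. Replacing $f$ by an iterate, I may assume $p$ is a fixed point with multiplier $\lambda$ of modulus greater than $1$. Koenigs' theorem then provides a nonconstant holomorphic Poincar\'e function $\Phi \colon \mathbb{C} \to S$ with $\Phi(0) = p$, $\Phi'(0) = 1$ and $\Phi(\lambda z) = f(\Phi(z))$ for every $z$. Differentiating the semiconjugacy yields $\lambda \Phi'(\lambda z) = f'(\Phi(z))\Phi'(z)$, so $\lambda z$ is a critical point of $\Phi$ whenever $\Phi(z)$ is critical for $f$. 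Since $f$ is nonlinear it has critical values, and Picard's theorem forces $\Phi$ to attain all but at most two values of $S$; hence $\Phi$ has a critical point in $\mathbb{C}^{*}$. If every critical value of $\Phi$ equalled $p$, then $\Phi \colon \mathbb{C} \setminus \Phi^{-1}(p) \to S \setminus \{p\}$ would be an unramified covering, and a short uniformization argument combined with $\Phi'(0) = 1$ would force $f$ to be affine; so I may pick a critical point $c$ of $\Phi$ with $\Phi(c) \neq p$.

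Next I would choose a simply connected open neighborhood $V \subset S$ of $\{p, \Phi(c)\}$ such that the connected component $D$ of $\Phi^{-1}(V)$ containing $0$ is bounded, simply connected, and has $c$ as its unique critical point of $\Phi$ in $D$, with multiplicity one. Riemann--Hurwitz then shows $\Phi \colon D \to V$ to be a proper branched cover of degree $2$. Using $f^{\circ n} \circ \Phi = \Phi \circ \mu_{\lambda^{n}}$, where $\mu_{\lambda^{n}}(z) = \lambda^{n} z$, I pick $n$ large enough that $\lambda^{-n}D$ is contained in a neighborhood of $0$ on which $\Phi$ is univalent. Then $U_{n} := \Phi(\lambda^{-n}D)$ is a small simply connected neighborhood of $p$ with $U_{n} \Subset V$, and $f^{\circ n} \colon U_{n} \to V$ is a proper holomorphic map of degree $2$ with unique critical value $\Phi(c)$. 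Since $U_{n}$ shrinks to $\{p\}$ while $\Phi(c) \neq p$, for $n$ large we have $\Phi(c) \in V \setminus U_{n}$, so $f^{\circ n} \colon U_{n} \to V$ is a quadratic-like map with escaping critical value. The remark preceding the lemma then yields the escaping quadratic-like map $f^{\circ n} \colon (f^{\circ n})^{-1}(U_{n}) \to U_{n}$.

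The main obstacle I anticipate is the geometric choice of $V$: I must ensure that the $\Phi$-preimage component $D$ containing $0$ is bounded, simply connected, and captures exactly one simple critical point of $\Phi$. Since $\Phi$ typically has infinitely many critical points (especially in the transcendental case), $V$ cannot simply be a round disk; instead it should be chosen as a thin tubular neighborhood of an arc joining $p$ to $\Phi(c)$ in $S$ that avoids all other critical values of $\Phi$. Controlling the topology and boundedness of $D$ as $V$ shrinks, combined with Riemann--Hurwitz to pin the degree at $2$, is where the technical work lies.
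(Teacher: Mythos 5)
Your construction hinges on the Poincar\'e function $\Phi$ having a critical point, and this fails in general, so there is a genuine gap. If $f$ is a transcendental entire map with nonvanishing derivative (for instance $f(z) = e^{z}$, or any $f$ with $f^{\prime} = e^{g}$), then the relation $\lambda \Phi^{\prime}(\lambda z) = f^{\prime}\left( \Phi(z) \right) \Phi^{\prime}(z)$ together with $\Phi^{\prime}(0) = 1$ shows by induction on the disks $\left\lbrace \lvert z \rvert < \lvert \lambda \rvert^{n} r \right\rbrace$ that $\Phi^{\prime}$ never vanishes; hence $\Phi$ admits no proper degree-$2$ restriction with a branch point, and your $D$, $V$ and $U_{n}$ do not exist. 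The step ``Picard forces $\Phi$ to have a critical point'' also fails for power maps: for $f(z) = z^{d}$ one has $\Phi = \exp$, which omits exactly the two critical points $0$ and $\infty$ of $f$. Note that the escaping quadratic-like maps of the lemma are unramified by definition ($U$ has two components, each mapped biholomorphically onto $V$), so routing the construction through a branched quadratic-like map imports a hypothesis ($f$ has critical points met by $\Phi$) that the lemma does not assume. The paper's proof (given only for rational $f$; the entire case is delegated to Bergweiler) produces the two sheets directly from two distinct contracting inverse branches at a repelling periodic point $z_{1}$ --- the branch fixing $z_{1}$ and a branch sending $z_{1}$ to another iterated preimage $z_{2}$ --- iterated until their images are disjoint and compactly contained in $V$; no critical points are needed.

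Even when $\Phi$ does have a suitable critical point, the step you defer as ``technical'' conceals a second gap: if $V$ is merely a thin neighborhood of an arc joining $p$ to $\Phi(c)$ in $S$, the component $D$ of $\Phi^{-1}(V)$ containing $0$ has no reason to contain $c$, since the lift of that arc starting at $0$ ends at some preimage of $\Phi(c)$ that need not be $c$. You would have to take the arc to be $\Phi\left( \widetilde{\gamma} \right)$ for an arc $\widetilde{\gamma}$ from $0$ to $c$, and then still control the boundedness, simple connectivity and degree of $\Phi$ on $D$; for transcendental $\Phi$ with asymptotic values near the arc, $D$ can fail to be bounded however thin $V$ is taken. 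This is precisely the difficulty that the Ahlfors five islands theorem resolves in Bergweiler's proof, so your argument does not bypass that input either (indeed, even the existence of repelling periodic points for transcendental entire maps, which you quote from Baker, rests on it).
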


\begin{proof}[Proof in the case of rational maps]
Suppose here that $f \colon \widehat{\mathbb{C}} \rightarrow \widehat{\mathbb{C}}$ is a rational map of degree $d \geq 2$. Then $f$ has infinitely many repelling periodic points. Moreover, $f$ has only finitely many critical points and, hence, also only finitely many periodic points that lie in the forward orbit of a critical point. Therefore, $f$ has a repelling periodic point $z_{1} \in \mathbb{C}$, with period $p \geq 1$, that does not lie in the forward orbit of a critical point. There exist a simply connected open neighborhood $V_{1}$ of $z_{1}$ in $\mathbb{C}$ and a local inverse $g_{1} \colon V_{1} \rightarrow g_{1}\left( V_{1} \right)$ of $f^{\circ p}$ such that $g_{1}\left( z_{1} \right) = z_{1}$ and $g_{1}\left( V_{1} \right) \Subset V_{1}$. Now, $z_{1}$ lies in the Julia set $\mathcal{J}_{f}$ of $f$ and its iterated preimages accumulate on all of $\mathcal{J}_{f}$, and hence there exist $\ell \geq 1$ and $z_{2} \in V_{1} \setminus \left\lbrace z_{1} \right\rbrace$ such that $f^{\circ \ell}\left( z_{2} \right) = z_{1}$. The point $z_{2}$ is not critical for $f$ by the definition of $z_{1}$, and hence there exist a simply connected open neighborhood $V \subset V_{1}$ of $z_{1}$ and a local inverse $g_{2} \colon V \rightarrow g_{2}(V)$ of $f^{\circ \ell}$ such that $g_{2}\left( z_{1} \right) = z_{2}$ and $g_{2}(V) \Subset V_{1} \setminus \left\lbrace z_{1} \right\rbrace$. Now, note that $g_{1} \colon V_{1} \rightarrow g_{1}\left( V_{1} \right)$ is a contracting map with respect to the Poincar\'{e} metric on $V_{1}$ since $g_{1}\left( V_{1} \right) \Subset V_{1}$. Therefore, as $g_{1}\left( z_{1} \right) = z_{1}$, $g_{2}(V) \Subset V_{1} \setminus \left\lbrace z_{1} \right\rbrace$ and $V \subset V_{1}$, there exist $m_{1}, m_{2} \geq 1$ such that \[ W_{2} = g_{1}^{\circ m_{1}} \circ g_{2}(V) \Subset V \quad \text{and} \quad W_{1} = g_{1}^{\circ m_{2}}(V) \Subset V \setminus W_{2} \, \text{.} \] Define \[ h_{1} = g_{1}^{\circ m_{2}} \colon V \rightarrow W_{1} \quad \text{and} \quad h_{2} = g_{1}^{\circ m_{1}} \circ g_{2} \colon V \rightarrow W_{2} \, \text{.} \] Note that $h_{1}$ is a local inverse of $f^{\circ n_{1}}$, with $n_{1} = m_{2} p$, and $h_{2}$ is a local inverse of $f^{\circ n_{2}}$, with $n_{2} = m_{1} p +\ell$. Set \[ n = n_{1} n_{2} \quad \text{and} \quad U = h_{1}^{\circ n_{2}}(V) \cup h_{2}^{\circ n_{1}}(V) \, \text{.} \] Then $f^{\circ n} \colon U \rightarrow V$ is an escaping quadratic-like map. This completes the proof of the lemma in the case of rational maps.
\end{proof}

\subsection{Affine escaping quadratic-like maps}

We say that an escaping quadratic-like map $f \colon U \rightarrow V$ is \emph{affine} if it is affine on each of the two connected components of $U$. We say that two escaping quadratic-like maps $f_{1} \colon U_{1} \rightarrow V_{1}$ and $f_{2} \colon U_{2} \rightarrow V_{2}$ are \emph{conjugate} if there exists a biholomorphism $\phi \colon V_{1} \rightarrow V_{2}$ such that $\phi \circ f_{1} = f_{2} \circ \phi$ on $U_{1}$.

In our proof of Theorems~\ref{theorem:rational} and~\ref{theorem:entire}, we shall use the following characterization of exceptional maps (compare~\cite[Theorem~1.1]{JX2023}):

\begin{lemma}
\label{lemma:characterization}
Suppose that $f \colon S \rightarrow S$ is a nonlinear holomorphic map such that an escaping quadratic-like map of the form $f^{\circ n} \colon U \rightarrow V$, with $n \geq 1$ and $U, V \subset \mathbb{C}$, is conjugate to an affine escaping quadratic-like map. Then $f$ is exceptional.
\end{lemma}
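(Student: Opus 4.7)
The plan is to extract from the conjugacy a nonconstant holomorphic map $\Psi\colon\mathbb{C}\to S$ and two noncommuting affine self-maps $\alpha_1,\alpha_2$ of $\mathbb{C}$ satisfying $\Psi\circ\alpha_1=f^{\circ n}\circ\Psi=\Psi\circ\alpha_2$, invoke Lemma~\ref{lemma:rittBis} applied to $f^{\circ n}$, and then pass from the exceptionality of $f^{\circ n}$ to that of $f$.

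Let $\phi\colon V\to V'$ realize the conjugacy with an affine escaping quadratic-like map $g\colon U'\to V'$, and set $\psi:=\phi^{-1}$. Writing $U'=U'_1\sqcup U'_2$, the restriction $g|_{U'_i}$ extends to an entire affine map $\alpha_i\colon\mathbb{C}\to\mathbb{C}$; since $\alpha_i\colon U'_i\to V'$ is a biholomorphism with $U'_i\Subset V'$, its linear part $L_i$ has modulus greater than $1$, and its unique fixed point $z^\ast_i$ lies in $U'_i$ (indeed, $\alpha_i^{-1}(V')=U'_i$ necessarily contains $z^\ast_i$). For every $w\in\mathbb{C}$ the orbit $\alpha_1^{-k}(w)$ converges to $z^\ast_1\in V'$, so the formula $\Psi(w):=f^{\circ kn}\bigl(\psi(\alpha_1^{-k}(w))\bigr)$, taken for any $k$ large enough, defines a holomorphic extension $\Psi\colon\mathbb{C}\to S$ of $\psi$; independence of $k$ is immediate from $\psi\circ\alpha_1=f^{\circ n}\circ\psi$ on $U'_1$. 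By construction, $\Psi\circ\alpha_1=f^{\circ n}\circ\Psi$ on all of $\mathbb{C}$; and since $\Psi$ agrees with $\psi$ on $V'$ while $\psi\circ\alpha_2=f^{\circ n}\circ\psi$ on $U'_2$, the identity principle extends $\Psi\circ\alpha_2=f^{\circ n}\circ\Psi$ to all of $\mathbb{C}$ as well.

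The two affine maps do not commute: their fixed points $z^\ast_1\in U'_1$ and $z^\ast_2\in U'_2$ are distinct since $U'_1\cap U'_2=\emptyset$, and writing $\alpha_i(z)=L_i(z-z^\ast_i)+z^\ast_i$ one computes that $\alpha_1\circ\alpha_2-\alpha_2\circ\alpha_1$ is the nonzero constant $(L_1-1)(L_2-1)(z^\ast_1-z^\ast_2)$. Lemma~\ref{lemma:rittBis} applied to $f^{\circ n},\Psi,\alpha_1,\alpha_2$ thus shows that $f^{\circ n}$ is exceptional. The remaining task, which I expect to be the main obstacle, is to deduce that $f$ itself is exceptional; this is classical, since each exceptional family is characterized by an iterate-invariant semiconjugacy from $\mathbb{C}$ (or, in the Latt\`{e}s case, from a torus) that linearizes the dynamics, and lifting $f\circ p$ through the corresponding covering $p$ produces an analogous affine semiconjugacy for $f$ itself.
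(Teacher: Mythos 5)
Your proposal is correct and follows essentially the same route as the paper: extract the two affine branches $\alpha_1,\alpha_2$, use their repelling fixed points to extend the conjugacy to a semiconjugacy $\Psi\colon\mathbb{C}\to S$ satisfying $\Psi\circ\alpha_1=f^{\circ n}\circ\Psi=\Psi\circ\alpha_2$, note that the distinct fixed points force $\alpha_1$ and $\alpha_2$ not to commute, and apply Lemma~\ref{lemma:rittBis}. The final reduction from the exceptionality of $f^{\circ n}$ to that of $f$, which you flag as the remaining obstacle, is also simply asserted in the paper's proof and justified afterwards in a remark via the orbifold characterization of exceptional maps (a map is exceptional if and only if it is postcritically finite with parabolic orbifold, and $O_f=O_{f^{\circ n}}$), which is a cleaner substitute for your lifting sketch.
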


\begin{proof}
By hypothesis, there exist an affine escaping quadratic-like map $g \colon U^{\prime} \rightarrow V^{\prime}$ and a biholomorphism $\phi \colon V^{\prime} \rightarrow V$ such that $\phi \circ g = f^{\circ n} \circ \phi$ on $U^{\prime}$. Denote by $U_{1}^{\prime}$ and $U_{2}^{\prime}$ the two connected components of $U^{\prime}$. Then the restrictions of $g$ to $U_{1}^{\prime}$ and $U_{2}^{\prime}$ agree with the restrictions of affine maps $\alpha_{1} \colon \mathbb{C} \rightarrow \mathbb{C}$ and $\alpha_{2} \colon \mathbb{C} \rightarrow \mathbb{C}$. We have $\phi \circ \alpha_{1} = f^{\circ n} \circ \phi$ on $U_{1}^{\prime}$ and $\phi \circ \alpha_{2} = f^{\circ n} \circ \phi$ on $U_{2}^{\prime}$. As the affine maps $\alpha_{1}$ and $\alpha_{2}$ are repelling, we may use any of these two relations to extend $\phi$ to a holomorphic map $\widehat{\phi} \colon \mathbb{C} \rightarrow S$. By the identity principle, we have \[ \widehat{\phi} \circ \alpha_{1} = f^{\circ n} \circ \widehat{\phi} = \widehat{\phi} \circ \alpha_{2} \, \text{.} \] Furthermore, $\alpha_{1}$ and $\alpha_{2}$ have distinct fixed points, in $U_{1}^{\prime}$ and $U_{2}^{\prime}$ respectively, and hence they do not commute. Therefore, the map $f^{\circ n}$ is exceptional by Lemma~\ref{lemma:rittBis}, and hence so is $f$. Thus, the lemma is proved.
\end{proof}

\begin{remark}
The proof above uses the fact that, if $f \colon \widehat{\mathbb{C}} \rightarrow \widehat{\mathbb{C}}$ is a rational map of degree $d \geq 2$ such that $f^{\circ n}$ is exceptional for some $n \geq 1$, then so is $f$. This can be deduced from the following facts. Any postcritically finite rational map $f \colon \widehat{\mathbb{C}} \rightarrow \widehat{\mathbb{C}}$ of degree $d \geq 2$ has an associated orbifold $O_{f}$. A rational map $f \colon \widehat{\mathbb{C}} \rightarrow \widehat{\mathbb{C}}$ of degree $d \geq 2$ is an exceptional map if and only if it is postcritically finite and its orbifold $O_{f}$ is parabolic. If $f \colon \widehat{\mathbb{C}} \rightarrow \widehat{\mathbb{C}}$ is a rational map of degree $d \geq 2$ and $n \geq 1$, then $f$ is postcritically finite if and only if $f^{\circ n}$ is postcritically finite and, in this case, we have $O_{f} = O_{f^{\circ n}}$. We refer the reader to~\cite{BM2017} for further details.
\end{remark}

\section{Proof of the results}
\label{section:proof}

As for entire and rational maps, we can define the notions of periodic point and multiplier for escaping quadratic-like maps. It follows from Lemmas~\ref{lemma:restriction} and~\ref{lemma:characterization} that Theorems~\ref{theorem:rational} and~\ref{theorem:entire} are a consequence of the following result:

\begin{proposition}
\label{proposition:escaping}
Assume that $K$ is an imaginary quadratic field and $f \colon U \rightarrow V$ is an escaping quadratic-like map whose multipliers all lie in $\mathcal{O}_{K}$. Then $f$ is conjugate to an affine escaping quadratic-like map.
\end{proposition}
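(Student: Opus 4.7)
The plan is to construct the conjugating biholomorphism $\phi$ by starting from the Koenigs linearizer at one of the two repelling fixed points of $f$ and extending it to all of $V$ by solving a functional equation whose solvability is forced by the integrality assumption.

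Setup. Let $g_{1}, g_{2} \colon V \to V$ be the two inverse branches of $f$, so that $g_{i}$ maps $V$ biholomorphically onto the connected component $U_{i}$ of $U$. Since $U_{i} \Subset V$, each $g_{i}$ is a strict contraction of the Poincar\'{e} metric of $V$ and admits a unique fixed point $\alpha_{i} \in U_{i}$; this $\alpha_{i}$ is a repelling fixed point of $f$ with multiplier $a_{i} = f'(\alpha_{i}) \in \mathcal{O}_{K}$ of modulus greater than $1$. More generally, the periodic orbits of $f$ of period $p$ are indexed by periodic itineraries $(i_{1}, \ldots, i_{p}) \in \{1,2\}^{p}$ via the iterated function system $\{g_{1}, g_{2}\}$, and all associated multipliers lie in $\mathcal{O}_{K}$.

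Reformulation via a functional equation. I seek a biholomorphism $\phi \colon V \to \phi(V)$ satisfying $\phi \circ f = a_{i} \phi + c_{i}$ on $U_{i}$ for $i = 1, 2$ and some constants $c_{i}$. Differentiating these relations shows that $L := \phi'' / \phi'$ must satisfy the branch-independent functional equation
\[
L(z) \,-\, L(f(z))\, f'(z) \;=\; \frac{f''(z)}{f'(z)} \qquad (z \in U),
\]
and, conversely, given any holomorphic solution $L$ on $V$, setting $\phi' = \exp \int L$ (normalized at $\alpha_{1}$) and integrating once more produces a $\phi$ that conjugates $f$ to an affine escaping quadratic-like map. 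Near $\alpha_{1}$ the Koenigs theorem provides a local germ of $\phi$, and hence of $L$, so the task reduces to extending this germ to a holomorphic solution on all of $V$. Iterating the functional equation yields
\[
L(z) \;=\; L(f^{n}(z))\, (f^{n})'(z) \,+\, \frac{(f^{n})''(z)}{(f^{n})'(z)},
\]
and at a periodic point $z$ of period $p$ and multiplier $\lambda$, taking $n$ a multiple of $p$, this pins down the value $L(z) = (f^{p})''(z)/\bigl(\lambda(1 - \lambda)\bigr)$. The values of $L$ on the dense set of periodic points of the Julia set $K_{f}$ are thus determined explicitly by the multipliers.

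Main obstacle. The crux is exactly the global extension step: the homogeneous equation $\Delta(z) = \Delta(f(z)) f'(z)$ admits nontrivial holomorphic germs, so uniqueness of the global extension does not come for free, and the rigidity has to be extracted from the arithmetic input. In place of the appeal to Rivera-Letelier's theorem from~\cite{RL2003}, one would use the discreteness of $\mathcal{O}_{K}$ in $\mathbb{C}$ together with the differential equation derived from iterating the functional equation at periodic orbits to verify that the Koenigs germs at $\alpha_{1}$ and $\alpha_{2}$ assemble into a single holomorphic function $L$ on $V$. Once this is achieved, the resulting $\phi$ is affine on the component containing $\alpha_{1}$ by construction, and the fact that its derivative at $\alpha_{2}$ equals the prescribed multiplier $a_{2}$ ensures that the conjugate map is also affine on the component containing $\alpha_{2}$.
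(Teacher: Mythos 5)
Your proposal sets up a reasonable framework (the cocycle equation for the nonlinearity $L = \phi''/\phi'$), but it stops exactly where the actual work begins: you explicitly label the global extension of $L$ as the ``main obstacle'' and then only assert that ``one would use the discreteness of $\mathcal{O}_{K}$ \ldots to verify'' that the two Koenigs germs assemble into a single holomorphic $L$ on $V$. No concrete mechanism is given for how the integrality of the multipliers forces this, so the hypothesis that the multipliers lie in $\mathcal{O}_{K}$ is never actually used. There is also a factual slip in your setup: the value $L(z) = (f^{\circ p})''(z)/\bigl(\lambda(1-\lambda)\bigr)$ at a periodic point is \emph{not} determined by the multiplier alone, since it involves the second derivative $(f^{\circ p})''(z)$ of the return map, which is not conjugacy-invariant data; so the claim that $L$ is ``determined explicitly by the multipliers'' on the periodic points is incorrect, and the interpolation strategy built on it does not get off the ground.

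For comparison, the paper extracts the rigidity from the arithmetic input by a quite different and fully explicit route. After linearizing $f$ on $U_{1}$ (so $f_{1}(z) = \lambda_{1} z$), it considers the periodic points $w_{n}$ with itinerary $(2,1,1,\dots,1)$ of period $n$, whose multipliers are $\rho_{n} = \lambda_{1}^{n-1} f_{2}'(w_{n})$ and satisfy the asymptotic $\rho_{n} = a\lambda_{1}^{n-1} + b + o(1)$. The discreteness of $\mathcal{O}_{K}$ (here the imaginary quadratic hypothesis is essential) upgrades this to the exact identity $\rho_{n} = a\lambda_{1}^{n-1} + b$ for large $n$, and the identity principle along the accumulating sequence $(w_{n})$ then yields the differential equation $f_{2}'(z) = a + b\, f_{2}(z)/z$ on all of $U_{2}$. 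Finally, applying the same argument to $f^{\circ 2}$ and comparing the two resulting equations at the fixed point $z_{2}$ forces $b = 0$, so $f_{2}$ is affine. These steps --- the special sequence of periodic points, the discreteness argument, the derivation of the differential equation, and the bootstrapping via $f^{\circ 2}$ --- constitute the substance of the proof and are all absent from your proposal. As it stands, the proposal is a plan that correctly locates the difficulty but does not contain a proof.
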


Our proof of this statement will occupy the rest of the note. Assume from now on that $f \colon U \rightarrow V$ is an escaping quadratic-like map whose multipliers all lie in the ring of integers $\mathcal{O}_{K}$ of some imaginary quadratic field $K$. Denote by $U_{1}$ and $U_{2}$ the two connected components of $U$ and define \[ f_{1} = f \vert_{U_{1}} \, \text{,} \quad f_{2} = f \vert_{U_{2}} \, \text{,} \quad g_{1} = f_{1}^{-1} \colon V \rightarrow U_{1} \, \text{,} \quad g_{2} = f_{2}^{-1} \colon V \rightarrow U_{2} \, \text{.} \] As $U \Subset V$, the maps $g_{1}$ and $g_{2}$ are contracting with respect to the Poincar\'{e} metric on $V$. Therefore, the maps $f_{1}$ and $f_{2}$ have unique fixed points $z_{1} \in U_{1}$ and $z_{2} \in U_{2}$ respectively, which are repelling. Denote by $\lambda_{1}$ and $\lambda_{2}$ their associated multipliers, which lie in $\mathcal{O}_{K}$.

By the Koenigs linearization theorem, the sequence $\left( \phi_{n} \right)_{n \geq 0}$ of univalent maps defined on $V$ by \[ \phi_{n}(z) = \lambda_{1}^{n} \left( g_{1}^{\circ n}(z) -z_{1} \right) \] converges to a univalent map $\phi \colon V \rightarrow \mathbb{C}$ such that $\phi \circ f_{1} = \lambda_{1} \phi$. Thus, replacing $f$ by $\phi \circ f \circ \phi^{-1}$ if necessary, we may assume that $f_{1}(z) = \lambda_{1} z$ for all $z \in U_{1}$, which yields $U_{1} = \frac{1}{\lambda_{1}} V$ and $z_{1} = 0$. We shall prove that $f_{2}$ is affine.

\subsection{A special sequence of periodic points}

Still following Ji and Xie's proof, we consider a particular sequence of periodic points for $f$. For each $n \geq 1$, the map $g_{2} \circ g_{1}^{\circ (n -1)} \colon V \rightarrow U_{2}$ is contracting with respect to the Poincar\'{e} metric on $V$, and hence it has a unique fixed point $w_{n} \in U_{2}$. For every $n \geq 1$, we have \[ \forall j \in \lbrace 1, \dotsc, n \rbrace, \, f^{\circ j}\left( w_{n} \right) = g_{1}^{\circ (n -j)}\left( w_{n} \right) = \frac{w_{n}}{\lambda_{1}^{n -j}} \, \text{.} \] In particular, for every $n \geq 1$, the point $w_{n}$ is periodic for $f$ with period $n$ and its associated multiplier $\rho_{n}$ satisfies \[ \rho_{n} = \prod_{j = 1}^{n} f^{\prime}\left( \frac{w_{n}}{\lambda_{1}^{n -j}} \right) = \lambda_{1}^{n -1} f_{2}^{\prime}\left( w_{n} \right) \in \mathcal{O}_{K} \, \text{.} \] Note that \[ w_{n} = g_{2}\left( \frac{w_{n}}{\lambda_{1}^{n -1}} \right) = \alpha +\frac{\beta}{\lambda_{1}^{n -1}} +o\left( \frac{1}{\lambda_{1}^{n}} \right) \quad \text{as} \quad n \rightarrow +\infty \, \text{,} \quad \text{with} \quad \left\lbrace \begin{array}{@{} l @{}} \alpha = g_{2}(0)\\ \beta = \alpha g_{2}^{\prime}(0) \end{array} \right. \, \text{,} \] since $\lim\limits_{n \rightarrow +\infty} w_{n} = \alpha$, and hence \[ \rho_{n} = \lambda_{1}^{n -1} f_{2}^{\prime}\left( w_{n} \right) = a \lambda_{1}^{n -1} +b +o(1) \quad \text{as} \quad n \rightarrow +\infty \, \text{,} \quad \text{with} \quad \left\lbrace \begin{array}{@{} l @{}} a = f_{2}^{\prime}(\alpha)\\ b = \beta f_{2}^{\prime \prime}(\alpha) \end{array} \right. \, \text{.} \]

Now, let us use the assumption that the multipliers of $f$ all lie in $\mathcal{O}_{K}$ to obtain the following (see~\cite[Lemma~3.1]{JX2023}):

\begin{claim}
We have $\rho_{n} = a \lambda_{1}^{n -1} +b$ for all $n$ sufficiently large.
\end{claim}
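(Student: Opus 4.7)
The plan is to exploit the discreteness of $\mathcal{O}_{K}$ in $\mathbb{C}$: since $K$ is imaginary quadratic, $\mathcal{O}_{K}$ is a lattice in $\mathbb{C}$, so any convergent sequence in $\mathcal{O}_{K}$ is eventually constant. The asymptotic expansion $\rho_{n} = a\lambda_{1}^{n-1} + b + o(1)$ alone does not force equality because $a$ and $b$ are a priori complex numbers not known to lie in $K$. I would therefore eliminate the leading term by forming a linear combination with coefficients already in $\mathcal{O}_{K}$.

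Concretely, I would introduce $\delta_{n} := \rho_{n+1} - \lambda_{1} \rho_{n}$. Since $\rho_{n}, \rho_{n+1}, \lambda_{1} \in \mathcal{O}_{K}$, we have $\delta_{n} \in \mathcal{O}_{K}$ for all $n$. Plugging the asymptotic expansion into $\delta_{n}$ cancels the $a\lambda_{1}^{n}$ contribution and gives
\[ \delta_{n} = \bigl(a\lambda_{1}^{n} + b + o(1)\bigr) - \lambda_{1}\bigl(a\lambda_{1}^{n-1} + b + o(1)\bigr) = b(1 - \lambda_{1}) + o(1) \, \text{.} \]
Thus $(\delta_{n})$ converges in $\mathcal{O}_{K}$, so by discreteness there exists $N \geq 1$ such that $\delta_{n} = b(1 - \lambda_{1})$ for every $n \geq N$; equivalently, $\rho_{n+1} - b = \lambda_{1}(\rho_{n} - b)$ for every $n \geq N$.

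Iterating this recursion, I get $\rho_{n} - b = \lambda_{1}^{n - N}(\rho_{N} - b)$ for all $n \geq N$. Comparing with $\rho_{n} - b = a\lambda_{1}^{n-1} + o(1)$, dividing by $\lambda_{1}^{n-1}$ and letting $n \to +\infty$ (recall $\lvert \lambda_{1} \rvert > 1$), I obtain $\rho_{N} - b = a\lambda_{1}^{N-1}$. Plugging back yields $\rho_{n} = a\lambda_{1}^{n-1} + b$ for every $n \geq N$, which is exactly the claim.

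I do not anticipate any serious obstacle: the only delicate point is the use of discreteness of $\mathcal{O}_{K}$, which is precisely where the imaginary quadratic hypothesis enters (the analogous argument would fail for a real quadratic $K$, since $\mathcal{O}_{K}$ would then be dense in $\mathbb{R}$). Everything else is elementary manipulation of the asymptotic expansion already established.
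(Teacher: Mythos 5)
Your proposal is correct and follows essentially the same route as the paper: both form the combination $\rho_{n+1}-\lambda_{1}\rho_{n}$ (up to sign), use that $\mathcal{O}_{K}$ is closed and discrete to make this eventually constant equal to $b(1-\lambda_{1})$, and then use $\lvert\lambda_{1}\rvert>1$ together with the asymptotics to kill the error term. The only cosmetic difference is that the paper phrases the last step as $\varepsilon_{n+1}=\lambda_{1}\varepsilon_{n}$ with $\varepsilon_{n}\to 0$ forcing $\varepsilon_{n}=0$, whereas you iterate the recursion for $\rho_{n}-b$ and pass to the limit; these are the same argument.
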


\begin{proof}
Write $\rho_{n} = a \lambda_{1}^{n -1} +b +\varepsilon_{n}$ for $n \geq 1$, so that $\lim\limits_{n \rightarrow +\infty} \varepsilon_{n} = 0$. Then, for every $n \geq 1$, we have \[ \lambda_{1} \rho_{n} -\rho_{n +1} = \left( \lambda_{1} -1 \right) b +\lambda_{1} \varepsilon_{n} -\varepsilon_{n +1} \in \mathcal{O}_{K} \, \text{.} \] Moreover, $\lim\limits_{n \rightarrow +\infty} \left( \lambda_{1} \varepsilon_{n} -\varepsilon_{n +1} \right) = 0$. It follows that $\left( \lambda_{1} -1 \right) b \in \mathcal{O}_{K}$ because $\mathcal{O}_{K}$ is closed in $\mathbb{C}$. Therefore, since $\mathcal{O}_{K}$ is discrete, for every $n$ sufficiently large, we have $\lambda_{1} \rho_{n} -\rho_{n +1} = \left( \lambda_{1} -1 \right) b$, and hence $\varepsilon_{n +1} = \lambda_{1} \varepsilon_{n}$. As $\left\lvert \lambda_{1} \right\rvert > 1$ and $\lim\limits_{n \rightarrow +\infty} \varepsilon_{n} = 0$, this yields $\varepsilon_{n} = 0$ for all $n$ sufficiently large. Thus, the claim is proved.
\end{proof}

\subsection{A differential equation}

Our proof now deviates from Ji and Xie's proof of Theorem~\ref{theorem:rational}. Instead of using a result concerning the non-Archimedean dynamics of rational maps, we show that $f_{2}$ is solution of a simple differential equation.

\begin{claim}
\label{claim:differential}
The holomorphic map $f_{2} \colon U_{2} \rightarrow V$ satisfies
\begin{equation}
\label{equation:differential}\tag{E}
\forall z \in U_{2}, \, f_{2}^{\prime}(z) = a +b \frac{f_{2}(z)}{z} \, \text{.}
\end{equation}
\end{claim}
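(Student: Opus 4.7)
The plan is to show that $f_2'(z) - a - b\,f_2(z)/z$ vanishes on a set with an accumulation point in $U_2$, and then conclude by the identity principle.

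First I would evaluate both sides of the proposed equation at the special periodic point $w_n$. Since $w_n$ is a fixed point of $g_2 \circ g_1^{\circ(n-1)}$, we have $f_2(w_n) = f(w_n) = g_1^{\circ(n-1)}(w_n) = w_n/\lambda_1^{n-1}$. Noting that $0 \in U_1$ lies in the other component, the point $w_n \in U_2$ is nonzero, so this rearranges to
\[
\frac{1}{\lambda_1^{n-1}} = \frac{f_2(w_n)}{w_n}.
\]
On the other hand, the formula $\rho_n = \lambda_1^{n-1} f_2'(w_n)$ combined with the exact equality $\rho_n = a\lambda_1^{n-1} + b$ (valid for all large $n$ by the previous claim) yields
\[
f_2'(w_n) = a + \frac{b}{\lambda_1^{n-1}} = a + b\,\frac{f_2(w_n)}{w_n}.
\]

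Next I would package this as a statement about a holomorphic function on $U_2$. Since $U_2$ is a connected open subset of $\mathbb{C}$ not containing the origin, the map
\[
h \colon U_2 \rightarrow \mathbb{C}, \quad z \mapsto f_2'(z) - a - b\,\frac{f_2(z)}{z}
\]
is holomorphic on $U_2$. The computation above shows that $h(w_n) = 0$ for all sufficiently large $n$. Since $w_n \rightarrow \alpha = g_2(0) \in U_2$ and the $w_n$ are pairwise distinct (they have pairwise distinct periods under $f$), the zero set of $h$ has an accumulation point in the connected open set $U_2$. By the identity principle, $h \equiv 0$ on $U_2$, which is exactly equation~\eqref{equation:differential}.

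The only real subtlety is making sure the auxiliary quantities are defined where I want to use them, in particular that $0 \notin U_2$ so that $f_2(z)/z$ is actually holomorphic on $U_2$, and that the $w_n$ are distinct and converge to a point of $U_2$ (not of $\partial U_2$); both follow immediately from the setup, since $U_1 \ni 0$ is disjoint from $U_2$ and $\alpha = g_2(0) \in g_2(V) = U_2$. Past that, the argument is a short identity-principle application, so I do not foresee a genuine obstacle.
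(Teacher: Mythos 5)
Your argument is correct and is essentially identical to the paper's proof: both evaluate the claimed identity at the periodic points $w_{n}$ using $f_{2}\left( w_{n} \right)/w_{n} = 1/\lambda_{1}^{n-1}$ and $\rho_{n} = \lambda_{1}^{n-1} f_{2}^{\prime}\left( w_{n} \right) = a \lambda_{1}^{n-1} + b$, then conclude by the identity principle since the $w_{n}$ accumulate at $\alpha \in U_{2}$. The extra checks you record (that $0 \notin U_{2}$ and that the $w_{n}$ are distinct and converge inside $U_{2}$) are valid and left implicit in the paper.
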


\begin{proof}
For every $n$ sufficiently large, we have \[ f_{2}^{\prime}\left( w_{n} \right) = \frac{\rho_{n}}{\lambda_{1}^{n -1}} = a +\frac{b}{\lambda_{1}^{n -1}} = a +b \frac{f_{2}\left( w_{n} \right)}{w_{n}} \, \text{.} \] Therefore, since $\left( w_{n} \right)_{n \geq 1}$ accumulates at $\alpha \in U_{2}$, the relation \eqref{equation:differential} follows from the identity principle. Thus, the claim is proved.
\end{proof}

Note that, as \eqref{equation:differential} is a first-order linear ordinary differential equation, it may be easily solved. However, we shall not use the explicit form of the solutions.

\subsection{Conclusion}

To conclude, note that $f^{\circ 2} \colon \frac{1}{\lambda_{1}} U_{1} \cup g_{2}\left( U_{2} \right) \rightarrow V$ is an escaping quadratic-like map such that $f^{\circ 2}(z) = \lambda_{1}^{2} z$ for all $z \in \frac{1}{\lambda_{1}} U_{1}$ and whose multipliers all lie in $\mathcal{O}_{K}$. Applying Claim~\ref{claim:differential} with $f^{\circ 2}$ instead of $f$, it follows that there exist $\widehat{a}, \widehat{b} \in \mathbb{C}$ such that \[ \forall z \in g_{2}\left( U_{2} \right), \, \left( f_{2}^{\circ 2} \right)^{\prime}(z) = \widehat{a} +\widehat{b} \frac{f_{2}^{\circ 2}(z)}{z} \, \text{.} \] Moreover, by the relation~\eqref{equation:differential}, we have \[ \forall z \in g_{2}\left( U_{2} \right), \, \left( f_{2}^{\circ 2} \right)^{\prime}(z) = f_{2}^{\prime}(z) \cdot f_{2}^{\prime}\left( f_{2}(z) \right) = \left( a +b \frac{f_{2}(z)}{z} \right) \left( a +b \frac{f_{2}^{\circ 2}(z)}{f_{2}(z)} \right) \, \text{.} \] Thus, combining the two relations above, we obtain
\begin{equation}
\label{equation:iterate}\tag{$\diamondsuit$}
\forall z \in g_{2}\left( U_{2} \right), \, a^{2} +a b \frac{f_{2}(z)}{z} +a b \frac{f_{2}^{\circ 2}(z)}{f_{2}(z)} +b^{2} \frac{f_{2}^{\circ 2}(z)}{z} = \widehat{a} +\widehat{b} \frac{f_{2}^{\circ 2}(z)}{z} \, \text{.}
\end{equation}

Let us prove that $\widehat{b} = b^{2}$. To obtain a contradiction, assume that $\widehat{b} \neq b^{2}$. Then, by the relation~\eqref{equation:iterate}, we have \[ \forall z \in g_{2}\left( U_{2} \right), \, H(z) \cdot H\left( f_{2}(z) \right) = A^{2} +\widehat{a} -a^{2} \, \text{,} \] where $A, B \in \mathbb{C}$ and $H \colon U_{2} \rightarrow \mathbb{C}$ are defined by \[ B^{2} = b^{2} -\widehat{b} \, \text{,} \quad A B = a b \quad \text{and} \quad H \colon z \mapsto A +B \frac{f_{2}(z)}{z} \, \text{.} \] It follows that \[ \forall z \in g_{2}^{\circ 2}\left( U_{2} \right), \, H\left( f_{2}(z) \right) \cdot H\left( f_{2}^{\circ 2}(z) \right) = H(z) \cdot H\left( f_{2}(z) \right) \, \text{,} \] which yields \[ \forall z \in g_{2}^{\circ 2}\left( U_{2} \right), \, H\left( f_{2}^{\circ 2}(z) \right) = H(z) \, \text{.} \] Therefore, differentiating the relation above and evaluating at $z = z_{2}$, we have \[ \frac{B \left( \lambda_{2} -1 \right)^{2} \left( \lambda_{2} +1 \right)}{z_{2}} = \left( \lambda_{2}^{2} -1 \right) H^{\prime}\left( z_{2} \right) = 0 \, \text{,} \] which is a contradiction since $B \neq 0$ and $\left\lvert \lambda_{2} \right\rvert > 1$. Thus, $\widehat{b} = b^{2}$.

By the relation~\eqref{equation:iterate}, it follows that \[ \forall z \in g_{2}\left( U_{2} \right), \, a b \frac{f_{2}(z)}{z} +a b \frac{f_{2}^{\circ 2}(z)}{f_{2}(z)} = \widehat{a} -a^{2} \, \text{.} \] Differentiating the relation above and evaluating at $z = z_{2}$, we obtain \[ \frac{a b \left( \lambda_{2} -1 \right) \left( \lambda_{2} +1 \right)}{z_{2}} = 0 \, \text{,} \] which yields $b = 0$ since $a \neq 0$ and $\left\lvert \lambda_{2} \right\rvert > 1$. Therefore, $f_{2}^{\prime}(z) = a$ for all $z \in U_{2}$ by the relation~\eqref{equation:differential}, and thus $f_{2}$ is affine. This completes the proof of Proposition~\ref{proposition:escaping}.

\providecommand{\bysame}{\leavevmode\hbox to3em{\hrulefill}\thinspace}
\providecommand{\MR}{\relax\ifhmode\unskip\space\fi MR }
\providecommand{\MRhref}[2]{%
	\href{http://www.ams.org/mathscinet-getitem?mr=#1}{#2}
}
\providecommand{\href}[2]{#2}


\begin{thebibliography}{BGH23}

\bibitem[Ber00]{B2000}
Walter Bergweiler, \emph{The role of the {A}hlfors five islands theorem in
	complex dynamics}, Conform. Geom. Dyn. \textbf{4} (2000), 22--34.
\MR{1741773}

\bibitem[BGH23]{BGH2023}
Xavier Buff, Igors Gorbovickis, and Valentin Huguin, \emph{Entire maps with
	rational preperiodic points and multipliers}, arXiv:2304.13674v1, 2023.

\bibitem[BM17]{BM2017}
Mario Bonk and Daniel Meyer, \emph{Expanding {T}hurston maps}, Mathematical
Surveys and Monographs, vol. 225, American Mathematical Society, Providence,
RI, 2017. \MR{3727134}

\bibitem[Hug22]{H2022}
Valentin Huguin, \emph{Quadratic rational maps with integer multipliers}, Math.
Z. \textbf{302} (2022), no.~2, 949--969. \MR{4480217}

\bibitem[Hug23]{H2023}
\bysame, \emph{Rational maps with rational multipliers}, J. \'{E}c. polytech.
Math. \textbf{10} (2023), 591--599. \MR{4573898}

\bibitem[JX23]{JX2023}
Zhuchao Ji and Junyi Xie, \emph{Homoclinic orbits, multiplier spectrum and
	rigidity theorems in complex dynamics}, Forum Math. Pi \textbf{11} (2023),
Paper No. e11, 37. \MR{4585467}

\bibitem[JXZ23]{JXZ2023}
Zhuchao Ji, Junyi Xie, and Geng-Rui Zhang, \emph{Space spanned by
	characteristic exponents}, arXiv:2308.00289v1, 2023.

\bibitem[Mil06]{M2006}
John Milnor, \emph{On {L}att\`{e}s maps}, Dynamics on the {R}iemann sphere,
Eur. Math. Soc., Z\"{u}rich, 2006, pp.~9--43. \MR{2348953}

\bibitem[Rit22]{R1922}
J.~F. Ritt, \emph{Periodic functions with a multiplication theorem}, Trans.
Amer. Math. Soc. \textbf{23} (1922), no.~1, 16--25. \MR{1501186}

\bibitem[RL03]{RL2003}
Juan Rivera-Letelier, \emph{Dynamique des fonctions rationnelles sur des corps
	locaux}, Ast\'{e}risque (2003), no.~287, xv, 147--230. \MR{2040006}

\end{thebibliography}
\end{document}